 \newcommand{\eps}{\varepsilon}
 \newcommand{\N}{\mathbb{N}}
 \newcommand{\prob}{\mathbb{P}}
 \newcommand{\me}{\mathbb{E}}
 \renewcommand{\P}{\mathbb{P}}
 \newcommand{\one}{\1}
 \newcommand{\skric}{{\mathcal C}}
 \newcommand{\skrie}{{\mathcal E}}
 \newcommand{\skrig}{{\mathcal G}}
 \newcommand{\skrik}{{\mathcal K}}
 \newcommand{\skril}{{\mathcal L}}
 \newcommand{\skriw}{{\mathcal W}}
 \newcommand{\skris}{{\mathcal S}}
 \newcommand{\skriv}{{\mathcal V}}
 \newcommand{\skrix}{{\mathcal X}}
 \newcommand{\skriy}{{\mathcal Y}}
 \newcommand{\heap}[2]{\genfrac{}{}{0pt}{}{#1}{#2}}
 \newcommand{\sfrac}[2]{\mbox{$\frac{#1}{#2}$}}
\def\1{{\mathchoice {1\mskip-4mu\mathrm l}      
{1\mskip-4mu\mathrm l} {1\mskip-4.5mu\mathrm l} {1\mskip-5mu\mathrm
l}}}
\newcommand{\eq}{\begin{equation}}
\newcommand{\en}{\end{equation}}
\newenvironment{Proof}
{\vskip0.1cm\noindent{\bf Proof. }{\hspace*{0.3cm}}}%
{\nopagebreak {\hspace*{\fill}\rule{2mm}{2mm}}\\ }
\renewcommand{\subsection}{\secdef \subsct\sbsect}
\newcommand{\subsct}[2][default]{\refstepcounter{subsection}
\vspace{0.15cm} {\flushleft\bf
\arabic{section}.\arabic{subsection}~\bf #1  }
\nopagebreak\nopagebreak}
\newcommand{\sbsect}[1]{\vspace{0.1cm}\noindent
{\bf #1}\vspace{0.1cm}}
\newtheorem{theorem}{Theorem}[section]
\newtheorem{lemma}[theorem]{Lemma}
\newtheoremstyle{thm}{1.5ex}{1.5ex}{\itshape\rmfamily}{}
{\bfseries\rmfamily}{}{2ex}{}
\newtheoremstyle{rem}{1.3ex}{1.3ex}{\rmfamily}{}
{\itshape\rmfamily}{}{1.5ex}{} \theoremstyle{rem}
\def\thebibliography#1{\section*{References}
  \list%
  {\arabic{enumi}.}
    {\settowidth\labelwidth{[#1]}\leftmargin\labelwidth
    \advance\leftmargin\labelsep
    \parsep0pt\itemsep0pt
    \usecounter{enumi}}
    \def\newblock{\hskip .11em plus .33em minus .07em}
    \sloppy                   
    \sfcode`\.=1000\relax}
\begin{document}
\begin{center}
{\LARGE{ Exponential approximation, method of  types for empirical
neighbourhood distributions of  random graphs  by  random
allocations }}\\[20pt]
\end{center}
\centerline{\sc{By Kwabena Doku-Amponsah}}
\renewcommand{\thefootnote}{}
\footnote{\textit{Mathematics Subject Classification :} 60F10,
05C80}

\footnote{University  of   Ghana, Statistics Department, P. O. Box
lg 115, Accra,kdoku@ug.edu.gh}
\renewcommand{\thefootnote}{}


\textbf{Abstract} In  this  article we find  exponential good
approximation of the empirical neigbourhood distribution of
symbolled random graphs conditioned to a given  empirical symbol
distribution and empirical pair distribution.  Using this
approximation  we  shorten or simplify the proof of (Doku-Amponsah
and Morters 2010,  Theorem~2.5); the large deviation principle (LDP)
for empirical neigbourhood distribution of symbolled random graphs.
We also show that the LDP for the empirical degree measure of the
classical Erd\H{o}s-R\'{e}nyi graph is a special case of
(Doku-Amponsah and Moerters, 2010,  Theorem~2.5). From the LDP for
the empirical degree measure, we derive an LDP for the the
proportion of isolated vertices in the classical Erd\H{o}s-R\'{e}nyi
graph.

\textbf{Keywords:} concentration inequalities, coupling, empirical
occupancy  measure,empirical degree measure, sparse random graphs,
bins  and  balls.

\textbf{1. Introduction}

The  Erd\H{o}s-R\'{e}nyi graph $\skrig(n,p)$ or $\skrig(n,nc/2)$ is
the simplest imaginable random graph, which  arises  by  taking  $n$
vertices, and placing an edge between  any  two of  distinct nodes
or  vertices with a fixed probability $0<p<1$ or  inserting  a fixed
number $nc$ edges at random among the $n$ vertices.  See,~(Van Der
Hofstad ,2009). Several large deviation (LD) results for this graphs
have been found. See, for example  ( O'Connell ,1998), (Biggins and
Penman, 2009), (Doku-Amponsah and Moerters, 2010), ( Doku-Amponsah,
2006), (Bordenave and Caputo, 2013), (Mukherjee, 2013).

( O'Connell ,1998) proved an LDP  for    the  relative  size of  the
largest connected component  and  the  number  of
 isolated  vertices  in  the random  graph  $\skrig(n,p)$  with  $p=O(1/n).$ (O'Connell ,1998) also presented  an  LDP
 and a
  related result for  the  number of  isolated  vertices in   the  random  graph
  $\skrig(n,c/n).$ i.e. the  near-critical  or  sparse  case.
  An  LDP  for  the
 proportion  of  edges  to   the  number  of  potential vertices  of
 the  supercritical case  has  been  found  by
 (Biggins  and  Penman, 2009). (Doku-Amponsah and Moerters, 2010),
( Doku-Amponsah, 2006)  or ( Doku-Amponsah, 2012)
  obtained  several LDPs , including  the  LDP for
empirical degree distribution    for  near-critical  or
 sparse  case.
   (Boucheron  et  al., 2002)  attempted  to
 prove from  the  LDP  for  the  empirical  occupancy  process
 an   LDP  for  the  degree  distribution  of  the random  graph
 $\skrig(n,nc).$ But
(Doku-Amponsah et  al., 2010)  conjectured  that  the  prove of this
LDP does  not hold. Recently, (Bordenave and Caputo, 2013) obtained
LDPs for the empirical  neighbourhood distribution  in
$\skrig(n,c/n)$ and   $\skrig(n,nc/2).$ The LDP for the  empirical
degree distribution in $\skrig(n,c/n)$ has been proved   in
(Mukherjee, 2013).

Our main aim in this article is  to  obtain an  exponential
approximation result, see Lemma~\ref{exp}, for the empirical
Neighbourhood distribution of symbolled random graphs.  Using this
result we shorten or simplify the proof of the LDP for empirical
Neighbourhood distribution of symbolled random graphs conditioned on
a  given empirical symbol measure and empirical pair distribution.
See example, (Doku-Amponsah et  al., 2010) or(
Doku-Amponsah,Theorem~2.5.1, 2012).

 Further,
we show that the large deviation principle for the empirical degree
measure of $\skrig(n,nc/2)$ is  a special case  of (
Doku-Amponsah,2012, Theorem~2.5.1). From  this result we find an LDP
for the proportion  of  isolated vertices  in the graph
$\skrig(n,nc/2).$ Note that the LDP for  the  proportion  of
isolated  vertices in the graph $\skrig(n,nc/2)$ is new in the
literature. See, ( O'Connell ,1998) for similar result.

The  main  technique  used  in  this  article  is the  method of
 types,  see  ( Dembo  and Zeitouni, 1998, Section~2.1). The  method  of  types  is applied  to
  an exponential approximate  model for  the  symbolled  random  graph model  which
 we shall  obtain by  randomly allocating symbolled balls in to symbolled  bins.

The symbol  random  graphs, see (Penman, 1998) or Inhomogeneous
random graphs, see  (Van Der Hofstad ,2009), which has
Erd\H{os}-R\'{e}nyi graph with one symbol   as an example permit a
dependence between symbol connectivity of  the nodes.In next
subsection, we review the \emph{symbolled random graph model} as in
(Doku-Amponsah et  al., 2010).

\emph{ 1.1 The symbolled random  graph model}\label{SNS}

We begin by fixing the following notations. Let $\skrix$ be a symbol
or colour set. Let $\skriv=\{1,\ldots,n\}$ be a fixed set of $n$
vertices. Denote by $E$ the edge set.i.e.
$$E\subset\skrie:=\big\{(e_1,e_2)\in \skriv\times \skriv \, : \, e_1<e_2\big\},$$ where we  have  used  the \emph{formal} ordering of links  to simply
describe \emph{unoriented} edges.

Let $p_n\colon\skriy\times\skriy\rightarrow [0,1]$ be a symmetric
function and  $\mu:\skriy\to[0,1]$ a probability law.  We may
describe the simply \emph{symbolled random graph}~$Y$ with $n$
vertices in  the following
 manner: Any node $v\in \skriv$  gets symbol $Y(v)$ independently
 and  identically
according to the {\em symbol law} $\lambda.$  Given the colours, we
join any two nodes $u,v\in \skriv$ with  an edge, independently of
everything else, with a {\em edge probability} $p_n(Y(u),Y(v))$
otherwise  we keep them disconnected.  We always look at
$Y=((Y(v)\,:\,v\in V),E)$ under the combine distribution of graph
and symbol. We interpret $Y$ as symbolled random graph and consider
$Y(v)$ as the symbol of the node $v$. See,  Cannings and Penman,
2003) or (Penman, 1998).

Our interest in  this article is  on  the symbolled random graph
models in the \emph{near-critical} critical regimes. Thus, we look
at  cases when the edge probability $p_n(a,b)$ satisfies
$np_n(a,b)\to \skric(a,b),$ for all $a,b\in\skriy,$ and
$\skric\colon\skriy\times\skriy \to[0,\,\infty).$

By $\skriw(\skrix)$ we denote the space of probability measures  on
a finite or countable set $\skrix$, and by $\tilde\skriw(\skrix)$ we
denote the subspace of finite measures   defined on $\skrix,$ and we
endow both with the weak topology. Further, we denote by
$\tilde\skriw_*(\skrix \times \skrix)$ the subspace of symmetric
measures. By convection, we write $$\N=\{0,1,2,...\}.$$

For any symbolled graph $Y=((Y(v)\,:\,v\in \skriv),E)$ with $n$
nodes we recall from (Doku-Amponsah and Moerters, 2010), the
definition of  the \emph{empirical symbol
distribution}~$L_Y^1\in\skriw(\skriy)$,~by
$$L_Y^1(a):=\frac{1}{n}\sum_{u\in V}\delta_{Y(u)}(a),\quad\mbox{ for $a\in\skriy$, }$$
and  the \emph{empirical pair distribution}
$L_Y^2\in\tilde\skriw_*(\skriy\times\skriy),$ by
$$L_Y^2(b,a):=\frac{1}{n}\sum_{(e_1,e_2)\in E}[\delta_{(Y(e_2),\,Y(e_1))}+
\delta_{(Y(e_1),\,Y(e_2))}](b,a),\quad\mbox{ for $a,b\in\skriy$. }$$
We observe that by  definition $L_Y^2$ is  finite symmetric measure
with total mass $\|L_Y^2\|$ equal to $2|E|/n$. Finally we recall the
definition of   the \emph{empirical Neighbourhood distribution}
$M_Y\in\skriw(\skriy\times\N^{\skriy})$, by
$$M_Y(a,\ell):=\frac{1}{n}\sum_{u\in V}\delta_{(Y(u),\skril(u))}(a,l),\quad
\mbox{ for $(a,l)\in\skriy\times\N^{\skriy}$, }$$ where
$\skril(u)=(\ell^{u}(b),\,b\in\skriy)$ and $\ell^{u}(b)$ is the
number of nodes of symbol $b$ linked to node $v$. For any
$\mu\in\skriw(\skriy\times\N^{\skriy})$we  denote by $\mu_1$  the
$\skriy-$ marginal of $\mu$ and for  every
$(b,a)\in\skriy\times\skriy,$ let $\mu_2$  be the law  of  the pair
$(a,l(b))$ under  the  measure $\mu.$ Define the  measure (finite),
$\langle\mu(\cdot,\ell),\,l(\cdot)\rangle\in\tilde\skriw(\skriy\times\skriy)$
by
$$\Delta_2(\mu)(b,a):=
\sum_{l(b)\in\N}\mu_2(a,l(b))l(b), \quad\mbox{ for $a,b\in\skriy$}$$
and  write $\Delta_1(\mu)=\mu_1.$ We define the function  $\Delta
\colon \skriw(\skriy\times\N^{\skriy}) \to \skriw(\skriy) \times
\tilde\skriw(\skriy \times \skriy)$ by
$\Delta(\mu)=(\Delta_1(\mu),\Delta_2(\mu))$ and note that
$\Delta(M_X)=(L_Y^1, L_Y^2).$ Observe  that $\Delta_1$ is a
continuous function  but $\Delta_2$ is \emph{discontinuous} in the
weak topology. In particular, in  the summation  $\displaystyle
\sum_{l(b)\in\N}\mu_2(a,l(b))l(b)$ the function $l(b)$ may be
unbounded and  so  the  functional $\displaystyle
\mu\to\Delta_2(\mu)$ would not be continuous in the weak topology.
We call a pair of measures
$(\pi,\mu)\in\tilde{\skriw}(\skriy\times\skriy)\times\skriw(\skriy\times\N^{\skriy})$
\emph{sub-consistent} if
\begin{equation}\label{consistent}
\Delta_2(\mu)(b,a) \le \pi(b,a), \quad\mbox{ for all
$a,b\in\skriy,$}
\end{equation}
and \emph{consistent} if equality holds in \eqref{consistent}.   For
any $n\in\N$ we define the  following  sets:
$$\begin{aligned}
\skriw_n(\skriy) & := \big\{ \nu\in \skriw(\skriy) \, : \, n\nu(b) \in \N \mbox{ for all } b\in\skriy\big\},\\
\tilde \skriw_n(\skriy \times \skriy) & := \big\{ \pi\in
\tilde\skriw_*(\skriy\times\skriy) \, : \, \sfrac
n{1+\one\{a=b\}}\,\pi(b,a) \in \N  \mbox{ for all } b,a\in\skriy
\big\}\,.
\end{aligned}$$

\emph{1.2 The conditional symbolled random graph models}

In  the remaining part  of  this  article we may assume that
$\nu(a)>0$ for all $a\in\skriy$. Note that the law of the symbolled
random graph given the empirical symbol distribution $\nu_n$ and
empirical pair distribution ~$\pi_n$,
$$\prob_{(\nu_n,\pi_n)}=\prob\{ \,\cdot\,  \,|\,\Delta(M_Y)=(\nu_n,\pi_n)\},$$
may be described as  follows:

\begin{itemize}
\item We assign symbols to the nodes by drawing  from the pool
of $n$~symbols which contains any symbol $a\in\skriy,$  $n\nu_n(a)$
times without replacement;

\item for each unordered pair $\{a,b\}$ of symbols  we construct (exactly) $m_n(b,a)$ edges by
drawing without replacement from the collection of potential edges
linking nodes of symbol $a$ and $b$,
where $$m_n(b,a):=\left\{ \begin{array}{ll} n\, \pi_n(b,a) & \mbox{if $a\not=b$}\\
\frac n2\, \pi_n(b,a) & \mbox{if }a=b\, .\end{array}\right.$$
\end{itemize}

By $Y_n$  we  denote  the conditional symbolled random graph   with
empirical symbol measure $\nu_n$  and  empirical pair
measure~$\pi_n.$

\textbf{2. Main  Results}

The  main  theorem  in  this  section   is  an  LDP  for  the
proportion  of  isolated  nodes in the  random  graph
$\skrig(n,nc/2).$ We recall  from  (Doku-Amponsah and Moerters,
2010), the \emph{empirical degree measure} $D_Y \in \skriw(\N )$ of
the symbolled random as
$$D_Y(k)= \sum_{b\in\skriy}\sum_ {l\in\N^{\skriy}} \delta_k\big({\textstyle
\sum_{a\in\skriy}} l(a)\big)\, M_Y(b,l), \qquad \mbox{ for
$k\in\N$.}$$

 \begin{theorem}\label{main3}
Suppose $D_Y$ is the empirical degree measure of the  random graph
$\skrig(n,nc/2).$ Then,  as $n\to\infty,$ the  proportion  of
isolated  nodes  $D_Y(0),$ obeys  an  LDP with good, convex rate
function

\begin{align}\label{randomg.isolated1}
\eta(x)= \left\{ \begin{array}{ll}
x\log\sfrac{x}{e^{-c}}+(1-x)\log\sfrac{(1-x)}{(1-e^{-c})}+c\log\lambda-c\log
c, & \mbox{if\,
$x\ge 1-c $,}\\
\infty & \mbox{ if  $ x< 1-c $,}
\end{array}\right.
\end{align}
where  $\lambda=\lambda(x,c)$  is  the  unique  root of
$\sfrac{1-e^{-\lambda}}{\lambda}=\sfrac{1-x}{c}.$
\end{theorem}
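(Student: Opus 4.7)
The plan is to derive Theorem~\ref{main3} from (Doku-Amponsah,~2012,~Theorem~2.5.1) through two successive applications of the contraction principle, followed by an explicit variational calculation. First I would recognise $\skrig(n,nc/2)$ as the conditional symbolled random graph of Section~1.2 in the degenerate case in which the symbol alphabet is the singleton $\skriy=\{*\}$, the empirical symbol distribution is $\nu_{n}=\delta_{*}$, and the empirical pair distribution is $\pi_{n}\equiv c$; the last condition encodes the fact that exactly $nc/2$ edges are present. Under this identification $M_{Y}\in\skriw(\{*\}\times\N)$ may be read off as a probability measure on $\N$, and (Doku-Amponsah,~2012,~Theorem~2.5.1) furnishes a conditional LDP for it with a good, convex rate function $I$ whose unique zero is the Poisson law $q_{c}$ and whose effective domain lies in the consistency set $\{\rho\in\skriw(\N):\sum_{k\ge 0}k\rho(k)=c\}$.

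Because the alphabet is a singleton, the map $\mu\mapsto D_{Y}$ reduces to the weakly continuous identification of $\{*\}\times\N$ with $\N$, so the contraction principle transfers this LDP verbatim to $D_{Y}$. A second application of the contraction principle to the weakly continuous evaluation $\rho\mapsto\rho(0)$ yields an LDP for $D_{Y}(0)$ with good rate function
\[
\eta(x)=\inf\Bigl\{I(\rho):\rho\in\skriw(\N),\ \rho(0)=x,\ \sum_{k\ge 1}k\rho(k)=c\Bigr\}.
\]
Feasibility requires the mass $1-x$ on $\{k\ge 1\}$ to have mean at most $c$, and this mass automatically has mean at least $1-x$; thus the infimum is taken over an empty set, and hence $\eta(x)=\infty$, whenever $x<1-c$.

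The main obstacle is the explicit evaluation of this infimum. Writing $I(\rho)=H(\rho\,\|\,q_{c})$ and introducing Lagrange multipliers for $\rho(0)=x$, $\sum_{k\ge 1}\rho(k)=1-x$ and $\sum_{k\ge 1}k\rho(k)=c$, one finds that the minimiser has the form
\[
\rho^{*}(0)=x,\qquad \rho^{*}(k)=(1-x)\,\frac{\lambda^{k}e^{-\lambda}}{k!\,(1-e^{-\lambda})}\qquad (k\ge 1),
\]
i.e.\ a Poisson$(\lambda)$ law conditioned on $\{k\ge 1\}$ and mixed with an atom of mass $x$ at $0$. The mean constraint then becomes $(1-e^{-\lambda})/\lambda=(1-x)/c$, which identifies $\lambda=\lambda(x,c)$ uniquely as in the theorem. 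Substituting $\rho^{*}$ back into $H(\rho^{*}\,\|\,q_{c})$ and using this defining relation to cancel the cross-terms should produce the four-term expression in~\eqref{randomg.isolated1}. Goodness and convexity of $\eta$ then follow from the corresponding properties of $I$ combined with implicit differentiation of the defining equation for $\lambda$.
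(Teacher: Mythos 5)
Your proposal follows essentially the same route as the paper: the paper likewise proves the theorem by contracting the degree-measure LDP of Theorem~\ref{main2} (itself obtained from Theorem~\ref{main1} by the singleton-alphabet reduction you describe) under the map $d\mapsto d(0)$, observes that the constraint set is empty when $x<1-c$, and identifies the Lagrangian minimiser as the same zero-modified conditional Poisson law with $\lambda$ determined by $(1-e^{-\lambda})/\lambda=(1-x)/c$. One caveat common to your sketch and the printed proof: carrying out the final substitution of $\rho^{*}$ into $H(\rho^{*}\,\|\,q_{c})$ leaves an additional term $(1-x)\log\sfrac{e^{c}-1}{e^{\lambda}-1}$ that does not cancel against the displayed four-term expression in \eqref{randomg.isolated1}, so rather than asserting that the computation ``should produce'' that formula, you ought to write the simplification out in full and check the claimed cancellation.
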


Theorem~\ref{main2}  below  and  the contraction  principle imply
the  LDP for  the  proportion of  isolated vertices.i.e.
Theorem~\ref{main3}. (O'Connell,  1998)  obtained  similar large
deviation result  for the number  of isolated  nodes in  the random
graph $\skrig(n,c/n).$

\begin{theorem}[Doku-Amponsah,2006, \,Doku-Amponsah, 2012]\label{main2}
Suppose $D_Y$ is the empirical degree measure of the  random graph
$\skrig(n,nc/2).$
 Then , as $n\to\infty$, $D_Y$ obeys an LDP
in the space $\skriw(\N )$ with good, convex rate function
\begin{equation}\label{randomg.ratedeg}
\begin{aligned}
\delta(d)= \left\{ \begin{array}{ll}H (d\,\|\,q_{c}), & \mbox{if
$\langle d\rangle= c $,}\\

 \infty & \mbox{otherwise.}

\end{array}\right.
\end{aligned}
\end{equation}
where $q_{x}$ is a Poisson distribution with parameter~$x$.
\end{theorem}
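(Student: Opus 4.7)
The plan is to deduce Theorem~\ref{main2} from the conditional LDP for the empirical neighbourhood distribution $M_Y$ (Doku-Amponsah, 2012, Theorem~2.5.1), whose proof via Lemma~\ref{exp} is the motivation for this paper. The key observation is that when the symbol set $\skriy$ is a singleton the neighbourhood measure $M_Y$ becomes literally the degree measure $D_Y$, the consistency condition $\Delta_2(\mu)=\pi$ becomes the mean constraint $\langle d\rangle = c$, and the reference distribution arising from the bins-and-balls approximation collapses to a single Poisson law.

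Concretely, I would set $\skriy=\{*\}$, under which $\skriy\times\N^{\skriy}$ is in bijection with $\N$ and $M_Y(*,k)=D_Y(k)$. The symbol marginal $L_Y^1$ is automatically $\delta_*$, and $\pi_n(*,*)=2|E|/n$; choosing $\pi_n(*,*)=c_n$ so that $\sfrac{nc_n}{2}=\lfloor nc/2\rfloor\in\N$ makes the conditional graph $Y_n$ coincide in law with $\skrig(n, nc/2)$, with $c_n\to c$. Applying Theorem~2.5.1 then gives an LDP for $M_{Y_n}$ in the weak topology on $\skriw(\N)$, and the identification $M_{Y_n}=D_{Y_n}$ transfers this immediately to $D_{Y_n}$.

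The remaining task is to verify that the general rate function specialises in the single-symbol case to $H(d\|q_c)$ on $\{\langle d\rangle=c\}$ and to $+\infty$ elsewhere. In the allocation model underlying Lemma~\ref{exp}, the $nc/2$ edges are thrown uniformly at random into the $\binom{n}{2}$ potential slots, and a standard Poisson heuristic shows that the degree of a typical vertex is asymptotically $\mathrm{Poisson}(c)$; hence the reference measure appearing in the rate function is $q_c$ on $\N$. The consistency constraint $\Delta_2(\mu)(*,*)=\pi(*,*)=c$ then becomes exactly $\sum_k k\,d(k)=c$, and the general rate function (a relative entropy of $\mu$ against this reference, plus a pair-distribution term that vanishes by construction in the one-symbol case) reduces to $H(d\|q_c)$, matching~\eqref{randomg.ratedeg}. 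Convexity and goodness of $\delta$ are then inherited from the corresponding properties of the Poisson relative entropy together with closedness of the affine constraint $\langle d\rangle=c$.

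The main obstacle is this last reduction: one must unwind the explicit form of the rate function in Theorem~2.5.1 and check that the various Stirling cancellations, the factor $\tfrac12$ attached to symmetric pair counts, and the Poisson normalising constants combine so that only $H(d\|q_c)$ survives once $|\skriy|=1$ and $\Delta_2(\mu)(*,*)=c$ are imposed. Goodness of the level sets $\{\delta\le K\}$ is then a routine consequence of the uniform tail control provided by the entropy bound combined with the first-moment constraint.
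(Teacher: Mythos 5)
Your route is the one the paper itself takes: the paper disposes of Theorem~\ref{main2} with exactly the remark you expand upon, namely that for $|\skriy|=1$ the neighbourhood measure $M_Y$ \emph{is} the degree measure $D_Y$, the conditional model with $\nu_n=\delta_*$ and $\pi_n(*,*)=2\lfloor nc/2\rfloor/n\to c$ is precisely $\skrig(n,nc/2)$, and the reference measure $Poi$ collapses to the Poisson law $q_c$. The paper offers no further detail beyond this, so your specialisation of Theorem~\ref{main1} is, if anything, more explicit than the source.

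There is, however, one point where your sketch asserts something that does not follow from the cited theorem, and it is exactly the point you flag as ``the remaining task''. Theorem~\ref{main1} makes $\tilde{J}_{(\nu,\pi)}(\mu)$ finite on the \emph{sub-consistent} set $\Delta_2(\mu)\le\pi$, which in the one-symbol case is $\{d:\langle d\rangle\le c\}$, whereas \eqref{randomg.ratedeg} is $+\infty$ off $\{\langle d\rangle= c\}$. Your phrase ``the consistency constraint $\Delta_2(\mu)(*,*)=\pi(*,*)=c$ then becomes exactly $\sum_k k\,d(k)=c$'' silently upgrades sub-consistency to consistency. The two candidate rate functions genuinely differ: $d=q_{c'}$ with $c'<c$ satisfies $\langle d\rangle<c$ and $H(d\,\|\,q_c)<\infty$, so a literal reading of Theorem~\ref{main1} assigns it a finite rate while Theorem~\ref{main2} assigns $+\infty$. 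To close this you need a separate argument that, for a \emph{probability} limit $d$ with $\langle d\rangle<c$, the escape of edge-mass to infinity is super-exponentially costly: any $d'$ weakly close to $d$ with $\langle d'\rangle=c$ must carry vertex-mass $\delta\to 0$ at degrees of order $(c-\langle d\rangle)/\delta$, whence $H(d'\,\|\,q_c)\gtrsim (c-\langle d\rangle)\log(1/\delta)\to\infty$, so the infimum of the entropy over shrinking neighbourhoods of $d$ intersected with the mean-$c$ measures diverges. Without some such estimate (or an appeal to uniqueness of lower semicontinuous rate functions), the specialisation yields finiteness on $\{\langle d\rangle\le c\}$ rather than the claimed $\{\langle d\rangle=c\}$. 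Your remarks on the identification $M_{Y_n}=D_{Y_n}$, the choice of $c_n$, and goodness and convexity of $\delta$ are all fine.
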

Here we remark, that the  LDP result  of (Boucheron  et  al., 2002,
Theorem~7.1) holds  and   the  conjecture that (Boucheron  et  al.,
2002, Lemma~7.2) does not hold is  false. In fact the coupling
argument of (Boucheron  et  al., 2002)
 and Bennett's inequality, see (Bennett, 1962),  proves  Theorem~7.1  of
(Boucheron  et  al., 2002). Recently, (Boucheron  et  al.,
2002,Corollary 1.9) and (Mukherjee, 2013) confirm
 Theorem~\ref{main2}.

Note,  the degree distribution $D_Y$ is a continuous function of
$M_Y,$
and so Theorem~\ref{main1} below and the contraction principle gives the LDP for $D_Y$. 
In fact the LDP for $D_Y$ (above)  is  a  special   case   of
Theorem~\ref{main1} which was first proved  in ( Doku-Amponsah and
Moerters, 2010) or  (Doku-Amponsah, 2012)   by approximating a given
symbolled random graph from below by  another symbolled random
graphs  with the degree of
 each nodes growing  in  order of $o(n^{1/3}).$ See,
( Doku-Amponsah and Moerters, 2010, Lemma~4.10,~p.26-29). Note, that
in special case of classical Erd\H{o}s-R\'enyi graph
$\skrig(n,nc/2)$ we have that $M=D,$ the degree distribution  and
$$\langle \Delta(M_Y)\rangle=2|E|/n= c.$$

\begin{theorem}[ Doku-Amponsah and
Moerters, 2010]\label{randomg.LDprob}\label{main1} Suppose the
sequence $(\nu_n,\pi_n)$ in
$\skriw_n(\skriy)\times\tilde{\skriw}_{n}(\skriy\times\skriy)$
posses a limit $(\nu,\pi)$ in
$\skriw(\skriy)\times\tilde{\skriw}_*(\skriy\times\skriy).$ Let $Y$
be a symbolled random graph with $n$ nodes conditioned on the event
$\{ \Delta(M_Y)=(\nu_n,\pi_n) \}$. Then, as $n\rightarrow\infty,$
the empirical Neighbourhood distribution $M_Y$ obeys  an LDP in
$\skriw(\skriy\times\N^{\skriy})$ with good rate function

\begin{align}\label{randomg.rateLDprob}
\tilde{J}_{(\nu,\pi)}(\mu)=\left\{
\begin{array}{ll}H(\mu\,\|\,Poi) & \mbox
  {if  $(\pi,\mu)$ is sub-consistent  and $\mu_1=\nu$}\\
\infty & \mbox{otherwise,}
\end{array}\right.
\end{align}

$$Poi(a\,,\,l):=\mu_{1}(a)\prod_{b\in\skriy} e^{-\frac{\pi(b,a)}{\mu_1(a)}} \,
\frac{1}{l(b)!}\, \Big(\frac{\pi(b,a)}{\mu_1(a)}\Big)^{l(b)},
\quad\mbox{for $a\in\skriy$, $l\in\N^{\skriy}$} .$$
\end{theorem}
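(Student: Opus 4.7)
The plan is to apply the method of types to an auxiliary random-allocation model that is exponentially equivalent to the conditional symbolled random graph, and then transfer the resulting LDP using Lemma~\ref{exp}.

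First I would construct the bins-and-balls approximation: given the deterministic symbol assignment that realises $\nu_n$, for each unordered pair of symbols $\{a,b\}$ I place $m_n(b,a)$ labelled balls uniformly and independently into the positions of potential $a$--$b$ incidences (with repetitions and self-loops allowed, so the output is a multigraph). The neighbourhood coordinate $\ell^v(b)$ of a vertex $v$ with $Y(v)=a$ is then the number of balls from the $\{a,b\}$-pile landing at $v$, which is a $\mathrm{Binomial}(m_n(b,a),1/(n\nu_n(a)))$, and for distinct vertices these counts are asymptotically independent. Because $m_n(b,a)/(n\nu_n(a))\to \pi(b,a)/\nu(a)$, the Poisson limit theorem identifies the limiting per-vertex distribution with the reference kernel $Poi$ of the theorem, evaluated at $\mu_1=\nu$.

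Next I would invoke the method of types (Dembo--Zeitouni, Section~2.1) for the empirical distribution of the $n$ approximately independent pairs $(Y(v),\ell(v))$ under the allocation law. Since the symbol marginal is pinned to $\nu_n$, the measure $M_Y$ obeys an LDP on $\skriw(\skriy\times\N^{\skriy})$ with rate $H(\mu\|Poi)$, finite only on $\{\mu_1=\nu\}$. The ball-conservation identity $\Delta_2(M_Y)=\pi_n$ holds exactly at each finite $n$, but in the weak topology it relaxes to $\Delta_2(\mu)\le\pi$: mass carried by vertices of diverging degree contributes to $\pi_n$ but vanishes as an atom of the weak limit, and this is precisely the sub-consistency constraint in \eqref{consistent}.

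Finally, Lemma~\ref{exp} provides the exponential equivalence between the allocation model and the conditional simple-graph law $\prob_{(\nu_n,\pi_n)}$ at scale $n$, so the standard transfer of LDP under exponential equivalence (Dembo--Zeitouni, Theorem~4.2.13) yields the claimed principle with rate $\tilde J_{(\nu,\pi)}$, while goodness and lower semicontinuity of the rate are routine for a relative entropy restricted to a closed subset. The main obstacle is the exponential equivalence itself: one has to show that the correction from the allocation multigraph to the conditional simple graph, namely suppressing self-loops and collapsing repeated edges, perturbs $M_Y$ only on a superexponentially small event at scale $n$. In the near-critical regime $m_n(b,a)=O(n)$ the expected number of repeats per pile is $O(1)$, so Chernoff-type estimates handle the total-variation perturbation of $M_Y$ in expectation, but making the smallness superexponential uniformly over all piles and over the vertex indexing is the technical crux.
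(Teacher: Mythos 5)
Your proposal follows essentially the same route as the paper: pass to the random-allocation (bins-and-balls) model, derive the LDP for the empirical occupancy measure by the method of types with the sub-consistency constraint arising from escape of mass under the weak topology, and transfer the result via the exponential equivalence of Lemma~\ref{exp}, which the paper establishes by a step-by-step coupling controlled with Bennett's inequality. The only substantive difference is one of execution rather than strategy: the paper's method-of-types step is carried out by exact multinomial counting with refined Stirling estimates (the occupancy pairs are not independent, since the ball totals are fixed), whereas you appeal to approximate independence and a Poisson limit, which would need to be replaced by that exact counting to be rigorous.
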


\textbf{3. Proof  of  Main  Results}

\emph{ 3.1 Proof of  Theorem~\ref{main1} : Exponential approximation
by random allocation}

In order to improve( shorten) the proof of
Theorem~\ref{randomg.LDprob}, we pass to a simple \emph{random
allocation model}, which turns out to be equivalent. This model is
best described in term of symbolled balls being placed randomly into
symbolled bins.

Fix $n\ge 1$, a symbol law $\nu_n\in\skriw_n(\skriy)$  and an edge
law $\pi_n\in\tilde{\skriw}_{n}(\skriy\times\skriy).$ The bins
$\skriv=\{1,\ldots, n\}$  are now symbolled by drawing without
replacement from the pool of symbols, which contains the symbol
$a\in\skriy$ exactly $n\nu_n(a)$ times. For each ordered pair
$(b,a)\in\skriy\times\skriy$ of symbols, we independently  and
identically place $nm_n(b,a)$ balls of symbol $b$ into the
$n\nu_n(a)$ bins of symbol $a$ by drawing without replacement. We
denote by $\tilde{\prob}_{(\nu_n,\pi_n)}$ the distribution of the
random allocation model with symbol law $\nu_n\in\skriw_n(\skriy)$
and an edge law $\pi_n\in\tilde{\skriw}_{n}(\skriy\times\skriy)$.

In the resulting constellation we denote, for any bin
$v\in\{1,\ldots, n\}$, by $\tilde{Y}(v)$ its symbol, and by $l^v(b)$
the number of balls of symbol $b\in\skriy$ it contains. Now define
the \emph{empirical occupancy measure} of the constellation by
$$M_{\tilde{Y}}(b,l)
= \frac 1n \sum_{u\in \skriv} \delta_{(\tilde{Y}(u),
\tilde{\skril}(u))}(b, l), \qquad \mbox{ for }
(b,l)\in\skriy\times\N^{\skriy},$$ where
$\tilde{\skril}(u)=(\ell^u(a), a\in\skriy)$ is the symbol
distribution in bin $v$. In our first theorem we establish
exponential equivalence of the law of the empirical  occupancy
measure~$M_{\tilde{Y}}$ under the random allocation model
$\tilde{\prob}_{(\nu_n,\pi_n)}$ and the law of the empirical
Neighbourhood distribution $M$ under
$$\prob_{(\nu_n,\pi_n)}=\prob\{ \,\cdot\,  \,|\,\Delta(M_Y)=(\nu_n,\pi_n)\},$$
the law of the  symbolled  random graph conditioned to have symbol
law $\nu_n$  and edge distribution $\pi_n$. Recall the definition of
exponential equivalence, see (Dembo and Zeitouni, 1998,
Definition~4.2.10).

\begin{lemma}\label{randomg.expequivalnce}\label{exp}
The law of $M_{\tilde{Y}}$ under $\tilde{\prob}_{(\nu_n,\pi_n)}$ is
exponentially equivalent to the law of $M_Y$ under
$\prob_{(\nu_n,\pi_n)}.$
\end{lemma}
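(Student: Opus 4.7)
The plan is to construct an explicit coupling of $M_Y$ and $M_{\tilde{Y}}$ on a single probability space and to show the coupled pair coincides off an event of super-exponentially small probability; this is precisely exponential equivalence in the sense of (Dembo and Zeitouni, 1998, Definition~4.2.10).

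\emph{Coupling.} Both laws draw the same (hypergeometric) symbol assignment, so I couple that step trivially. Next, I run the allocation procedure to generate the balls-in-bins realisation under $\tilde{\prob}_{(\nu_n,\pi_n)}$, and I build a candidate multigraph from it by independently, for each unordered pair $\{a,b\}$ with $a\neq b$, matching each $b$-ball in an $a$-bin with an $a$-ball in a $b$-bin uniformly at random, and for $a=b$ by pairing the $a$-balls in $a$-bins uniformly. Each matched pair becomes a candidate edge. Let $Z\subset\skriv$ be the set of vertices carrying a self-loop or a multi-edge. Conditional on $Z$, I re-draw the edges incident to $Z$ uniformly subject to the required edge counts $m_n(b,a)$ and the simple-graph constraint. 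A standard configuration-model computation, using that the balls-in-bins layout conditioned on having no repeated pair and no self-pair is uniform on $m_n(b,a)$-subsets of $\skriv_a\times\skriv_b$, shows the resulting graph has law $\prob_{(\nu_n,\pi_n)}$, while by construction the balls-in-bins realisation has law $\tilde{\prob}_{(\nu_n,\pi_n)}$.

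\emph{Localising and bounding the discrepancy.} Off the set $Z$ every vertex $v$ has the same symbol and the same neighbourhood vector $\skril(v)\in\N^{\skriy}$ in the two models, so in any bounded metric $d$ for the weak topology on $\skriw(\skriy\times\N^{\skriy})$
$$d(M_Y,M_{\tilde{Y}}) \le C\,|Z|/n.$$
Hence it suffices to prove a super-exponential tail estimate for $|Z|$. In the near-critical regime $\pi_n\to\pi<\infty$, for each $(b,a)$ one matches $\Theta(n)$ pairs drawn from $\Theta(n^2)$ possible endpoint pairs, so $\E|Z|=O(1)$. Writing $|Z|$ as a sum of weakly dependent pair-indicators and applying a Bennett/Chernoff estimate (or a direct method-of-types count, in the spirit of Dembo--Zeitouni, Section~2.1) gives
$$\mathbb{P}^*(|Z|>\eps n)\le \exp\!\bigl(-\eps n\log(\eps n/K)+O(n)\bigr)$$
for every $\eps>0$, which decays faster than $e^{-cn}$ for every $c>0$. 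Combined with the previous display this yields $\frac{1}{n}\log \mathbb{P}^*\bigl(d(M_Y,M_{\tilde{Y}})>\delta\bigr)\to -\infty$ for every $\delta>0$.

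\emph{Main obstacle.} The two computational lemmas above (the Chernoff bound on $|Z|$ and the geometric bound $d(M_Y,M_{\tilde{Y}})\le C|Z|/n$) are routine. The delicate point is verifying that the matching-plus-repair procedure really has marginal law $\prob_{(\nu_n,\pi_n)}$, since the repair step is conditional on $Z$ itself. A cleaner detour, which I would use if the direct verification becomes fiddly, is to observe that $\tilde{\prob}_{(\nu_n,\pi_n)}(Z=\emptyset)\to 1$, and that on the event $\{Z=\emptyset\}$ the candidate multigraph is already distributed according to $\prob_{(\nu_n,\pi_n)}$; the super-exponential tail on $|Z|$ then more than suffices to transfer statements between the two measures, yielding the claimed exponential equivalence.
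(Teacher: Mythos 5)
Your overall strategy is the same as the paper's: couple the balls-in-bins configuration and the conditioned graph on one probability space, bound the total-variation distance between $M_{\tilde{Y}}$ and $M_Y$ by a constant times the number of ``bad'' coincidences divided by $n$, and then show that this count --- which has $O(1)$ expectation --- exceeds $\eps n$ only with super-exponentially small probability via a Bennett/Chernoff estimate. The difference lies in the coupling itself. The paper builds the allocation and the graph \emph{simultaneously and sequentially}: at step $k$ it draws the pair $(V_1^k,V_2^k)$, always drops the two balls, and repairs on the fly (re-drawing uniformly among the not-yet-present potential edges) whenever $V_1^k=V_2^k$ or the edge already exists. With this construction the graph marginal is manifestly uniform sampling of $m_n(b,a)$ edges without replacement, so the marginal-law verification that you flag as the main obstacle never arises; the discrepancy count $B^n(b,a)$ plays the role of your $|Z|$. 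Your configuration-model matching with a posterior repair conditioned on $Z$ could in principle be made to work, but the law of the repaired graph is exactly the point that requires proof (the set $Z$ is determined by the multigraph being repaired, so the conditioning is not innocent), and you do not supply it.

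The fallback you offer in its place is where the proposal genuinely breaks. The claim $\tilde{\prob}_{(\nu_n,\pi_n)}(Z=\emptyset)\to 1$ is false: with $m_n(b,a)=\Theta(n)$ matched pairs among $\Theta(n)$ balls, the numbers of self-loops and of multi-edges converge to nondegenerate Poisson random variables, so $\P(Z=\emptyset)$ converges to a constant strictly between $0$ and $1$. Moreover, even if that probability did tend to $1$, exponential equivalence requires the two coupled empirical measures to agree to within $\eps$ off an event whose probability decays faster than $e^{-cn}$ for every $c>0$; an event of merely vanishing --- let alone asymptotically constant --- probability on which the construction is left undefined cannot be discarded. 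This is precisely why the \emph{local} repair (so that the two configurations differ only at the $O(|Z|)$ affected vertices, and the super-exponential tail of $\{|Z|>\eps n\}$ does all the work) cannot be replaced by conditioning on $\{Z=\emptyset\}$. To complete the argument you must either rigorously verify the marginal law of the matching-plus-repair construction, or adopt the paper's sequential coupling, in which the Bennett bound is applied directly to the number of repaired steps.
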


\begin{Proof}

Define the metric $d$ of total variation by
$$d(\mu,\tilde{\mu})=\sfrac{1}{2}\sum_{(a,l)\in\skriy\times\N^{\skriy}}
|\mu(a,l)-\tilde{\mu}(a,l)|, \quad \mbox{ for
}\mu,\tilde{\mu}\in\skriw(\skriy\times\N^{\skriy}).$$ As this metric
produces the weak topology, the proof of
Lemma~\ref{randomg.expequivalnce} is equivalent to showing that for
every $\eps>0,$
\begin{equation}\label{randomg.totalv}
\lim_{n\rightarrow\infty}\sfrac{1}{n}\log\prob\big\{d(M_{\tilde{Y}}\,,\,M_Y)\ge\eps\big\}=-\infty,
\end{equation}
where $\prob$ indicates a suitable coupling between the random
allocation model and the symbolled graph.

To  begin,  denote by $V(a)$ the collection of nodes(bins) which
have symbol $a\in\skriy$ and observe that $$\sharp V(a)=n\nu_n(a).$$

For every $a,b\in\skriy$, begin: At each step $k=1,\ldots,
m_n(b,a),$ we randomly pick two nodes $V^k_1\in V(a)$ and $V^k_2\in
V(b)$. Drop one  ball  of  symbol $b$  in  bin  $V^k_1$ and one ball
of symbol $a$ in $V^k_2,$  and  link $V^k_1$ to $V^k_2$ by an edge
unless $V^k_1=V^k_2$ or the two nodes are already linked. If one of
these two things happen, then we simply choose an edge randomly from
the set of all potential edges linking symbols $a$ and $b$, which
are not yet present in the graph. This completes the construction of
a graph with $L_Y^1=\nu_n$, $L_Y^2=\pi_n$ and
\begin{equation}\label{randomg.XTX}
d(M_Y\,,\,M_{\tilde{Y}})\le \sfrac{2}{n}
\sum_{a,b\in\skriy}B^{n}(b,a)\, ,
\end{equation}
where $B^n(b,a)$ is the total number of steps $k\in\{1,\ldots,
m_n(b,a)\}$ at which there is discrepancy between the vertices
$V^k_1$, $V^k_2$ drawn and the nodes which formed the $k^{\rm th}$
edge connecting $a$ and $b$ in the random graph construction.

Given $a,b\in\skriy$,the probability that $V^k_1=V^k_2$ or the two
nodes are already linked is equal to
$$p_{[k]}(b,a):=\sfrac{1}{m_n(b,a)}\1_{\{b=a\}}+\big(1-\sfrac{1}{m_n(b,a)}
\1_{\{b=a\}}\big)\sfrac{(k-1)}{(m_n(b,a))^2}.$$ $B^{n}(b,a)$ is a
sum of independent Bernoulli random variables
$X_1,\,...,\,X_{nm_n(b,a)}$ with `success' probabilities equal to
$p_{[1]}(b,a), \ldots, p_{[nm_n(b,a)]}(b,a)$. Note that $\me[X_k]=
p_{[k]}(b,a)$  and
$$Var[X_k]=p_{[k]}(b,a)(1-p_{[k]}(b,a)).$$  Now,  we   have   $$\me B^{n}(b,a)= \sum_{k=1}^{n(b,a)} p_{[k]}(b,a)=\1_{\{b=a\}}
+\sfrac{1}{2}\big(1-\1_{\{b=a\}}\sfrac{1}{m_n(b,a)}\big)\big(1-\sfrac{1}{m_n(b,a)}\big)\le
\sfrac{1}{2}+\1_{\{b=a\}}.$$

We  write
$$\sigma_n^2(b,a):=\sfrac{1}{m_n(b,a)}\sum_{k=1}^{m_n(b,a)}Var[X_k]$$
and  observe  that   $$\lim_{n\to \infty}\me(B^n(b,a))=\lim_{n\to
\infty}Var(B^n(b,a))=\lim_{n\to
\infty}m_n(b,a)\sigma_n^2(b,a)=\1_{\{b=a\}}+\sfrac{1}{2}.$$

We  Define  $h(x)=(1+x)\log(1+x)-x,$ for  $x\ge 0$   and use
Bennett's inequality, see ( Bennett,  1962), to  obtain, for
sufficiently large $n$
$$\P\big\{ \sfrac{ B^{n}(b,a)}{n}\ge\sfrac{ \1_{\{b=a\}}+\sfrac{1}{2}}{n}+\delta_{1}\big\}
\le
exp\Big[-m_n(b,a)\sigma_n^2(b,a)h(\sfrac{n\delta_{1}}{n(b,a)\sigma_n^2(b,a)})\Big],$$
for any $\delta_1>0.$ Let $\eps\ge 0 $ and  choose
$\delta_1=\sfrac{\eps}{2m^2}.$ Suppose that we have
$B^n(b,a)\le\delta$. Then, by~\eqref{randomg.XTX},
$$d(M_Y,{\mu}_n)\le 2\delta_1 m^2=\eps.$$ Hence,
$$\begin{aligned}
\prob\big\{ d(M_Y,M_{\tilde{Y}}) > \eps \big\}  \le
\sum_{a,b\in\skriy} \prob\big\{ B^n(b,a)\ge n\delta_1 \big\}&\le
m^2\sup_{a,b\in\skriy}\prob\big\{ B^n(b,a)\ge
\1_{\{b=a\}}+\sfrac{1}{2}+ (n\delta_1)/2
\big\}\\
 & \le m^2\sup_{a,b\in\skriy} exp\Big[-m_n(b,a)\sigma_n^2(b,a)h(\sfrac{n\delta_{1}}{m_n(b,a)\sigma_n^2(b,a)})\Big] .
\end{aligned}$$

Let   $0\le \delta_2\le 1$. The,  for  sufficiently  large $n$ we
 have
\begin{equation}\begin{aligned}\label{Equ.coupling}
\frac{1}{n}&
\log\P\Big\{d(M_Y,M_{\tilde{Y}}) > \eps \Big\}\le-(1-\delta_{2})h(\sfrac{n\delta_1}{2(1+\delta_{2})})\\
&=-(\1_{\{b=a\}}+\sfrac{1}{2}-\delta_{2})\Big[(\sfrac{1}{n}+\sfrac{\delta_1}{2(\1_{\{b=a\}}+\sfrac{1}{2}+\delta_{2})})\log(1+\sfrac{n\delta_1}{2(\1_{\{b=a\}}
+\sfrac{1}{2}+\delta_{2})})-\sfrac{\delta_1}{2(\1_{\{b=a\}}+\sfrac{1}{2}+\delta_{2})}\Big].
\end{aligned}\end{equation}

This completes the proof of the lemma.
\end{Proof}

\emph{3.2 Proof of  Theorem~\ref{main1}: Large  deviation
probabilities by the  method  of  types}

Now Lemma~\ref{randomg.expequivalnce} and the large deviation
principle for $M_{\tilde{Y}}$ under $\tilde{\prob}_{(\nu_n,\pi_n)}$
implies the same large deviation principle for $M_Y$ under
$\prob_{(\nu_n,\pi_n)}$ in the weak topology. See,for  example (
 Dembo and  Zeitouni, 1998, Theorem~4.2.13). Consequently, the proof of
Theorem~\ref{randomg.LDprob} is equivalent to showing that for every
$\Sigma\subset\skriw(\skriy\times\N^{\skriy}),$
\begin{lemma}\label{main11}
\begin{align}
-\inf_{\mu\in
int(\Sigma)}\tilde{J}_{(\nu,\pi)}(\mu)&\le\liminf_{n\rightarrow\infty}\sfrac1n\log\tilde{\prob}\Big\{M_{\tilde{Y}}\in\Sigma\,\big|\,
\Delta(M_{\tilde{Y}})=(\nu_n,\pi_n)\Big\}\nonumber\\
&\le\limsup_{n\rightarrow\infty}\sfrac1n\log\tilde{\prob}\Big\{M_{\tilde{Y}}\in\Sigma\,\big|\,\Delta(M_{\tilde{Y}})=(\nu_n,\pi_n)\Big\}\le-\inf_{\mu\in
cl(\Sigma)}\tilde{J}_{(\nu,\pi)}(\mu).\nonumber
\end{align}
\end{lemma}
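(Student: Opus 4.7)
I would prove Lemma \ref{main11} by the classical method of types applied directly to the random allocation model. The key observation is that in $\tilde{\prob}_{(\nu_n,\pi_n)}$ the bin colours and the total number of balls of each ordered type $(b,a)$ are deterministic, so the event $\{\Delta(M_{\tilde{Y}})=(\nu_n,\pi_n)\}$ has probability one and the conditioning in Lemma \ref{main11} is vacuous. It therefore suffices to prove the unconditional LDP for $M_{\tilde{Y}}$. Because the allocations for distinct ordered pairs $(b,a)$ are independent, the probability assigned by $\tilde{\prob}_{(\nu_n,\pi_n)}$ to any admissible empirical measure factors into a product of multinomials that can be evaluated in closed form.

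\textbf{Per-type estimate.} For $\mu_n\in\skriw_n(\skriy\times\N^{\skriy})$ with $\Delta(\mu_n)=(\nu_n,\pi_n)$, I would count the configurations producing $M_{\tilde{Y}}=\mu_n$ in two layers. First, for each colour $a$, the $n\nu_n(a)$ bins of colour $a$ are partitioned among occupancy types $\ell$ according to $(n\mu_n(a,\ell))_\ell$, contributing $\binom{n\nu_n(a)}{(n\mu_n(a,\ell))_\ell}$. Second, for each ordered pair $(b,a)$ the $n\pi_n(b,a)$ balls of colour $b$ are distributed among the bins of colour $a$ so that each bin of type $\ell$ receives exactly $\ell(b)$ balls, contributing $(n\pi_n(b,a))!/\prod_\ell(\ell(b)!)^{n\mu_n(a,\ell)}$, which is then divided by the total number of placements $(n\nu_n(a))^{n\pi_n(b,a)}$. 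Applying Stirling's formula to every factorial and using the consistency identity $\pi_n(b,a)=\sum_\ell\mu_n(a,\ell)\ell(b)$, the logarithm collapses to
\begin{equation}
\sfrac{1}{n}\log\tilde{\prob}_{(\nu_n,\pi_n)}\{M_{\tilde{Y}}=\mu_n\}=-H(\mu_n\|Poi_n)+o(1),
\end{equation}
where $Poi_n$ is the product Poisson measure defined as in Theorem \ref{randomg.LDprob} with $(\nu,\pi)$ replaced by $(\nu_n,\pi_n)$. In particular the right-hand side tends to $-H(\mu\|Poi)$ whenever $\mu_n\to\mu$ weakly.

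\textbf{Bounds and main obstacle.} For the lower bound I would fix $\mu\in int(\Sigma)$ with $\tilde{J}_{(\nu,\pi)}(\mu)<\infty$, choose an approximating sequence $\mu_n\in\skriw_n(\skriy\times\N^{\skriy})$ with $\Delta(\mu_n)=(\nu_n,\pi_n)$ and $\mu_n\to\mu$, and apply the per-type estimate to $\tilde{\prob}\{M_{\tilde{Y}}\in\Sigma\}\ge\tilde{\prob}\{M_{\tilde{Y}}=\mu_n\}$. For the upper bound I would first establish exponential tightness: the truncated sets $K_R=\{\mu:\sum_{a,\,\ell:\,\sum_b\ell(b)>R}\mu(a,\ell)\le 1/R\}$ are compact in the weak topology, and the Poisson-type tails of each $\ell^v(b)$ make $\tilde{\prob}\{M_{\tilde{Y}}\notin K_R\}$ exponentially small as $R\to\infty$; on $K_R$ the number of admissible types grows only subexponentially, so a union bound combined with the per-type estimate produces the required $\limsup$. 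The main obstacle is the discontinuity of $\Delta_2$: the prelimit always satisfies $\Delta_2(M_{\tilde{Y}})=\pi_n$ exactly, whereas $\tilde{J}_{(\nu,\pi)}$ is finite on strictly sub-consistent $\mu$. For such a $\mu$ the approximating types $\mu_n$ must absorb the missing mass $\pi(b,a)-\Delta_2(\mu)(b,a)$ on occupancy vectors of diverging norm, and one has to verify that this shifted mass contributes only $o(n)$ to the relative entropy $H(\mu_n\|Poi_n)$. This construction, rather than the combinatorics, is the technical heart of the argument.
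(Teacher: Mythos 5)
Your proposal follows essentially the same route as the paper's proof: the conditioning is vacuous, the point probability of a type is the same product of multinomial coefficients divided by $(n\nu_n(a))^{n\pi_n(b,a)}$, refined Stirling estimates collapse its logarithm to $-nH(\mu_n\,\|\,Poi_n)+o(n)$, and then the lower bound comes from an approximating sequence of exact types while the upper bound is a union bound over types. The only differences are minor: the paper absorbs the union bound into the $|\skrik^{(n)}(\nu_n,\pi_n)|^{-1}$ normalization appearing in its per-type upper bound rather than invoking exponential tightness and a subexponential count of types, and the approximation of strictly sub-consistent $\mu$ by consistent types with mass escaping to infinity --- which you rightly single out as the technical heart --- is delegated to Doku-Amponsah and M\"orters (2010, Lemmas 4.6 and 4.9) rather than carried out afresh.
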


We begin the proof of Lemma~\ref{main11} by recalling the definition
of $\Delta$ a function on $\skriw(\skriy\times\N^{\skriy})$ given by
$$\mu\mapsto(\Delta_1(\mu),\Delta_2(\mu)).$$ Let
$(\nu_n,\pi_n)\rightarrow(\nu,\pi)\in\skriw(\skriy)\times\skriw(\skriy\times\skriy)$
and
 write  $$
\skrik^{(n)}(\nu_n,\pi_n)=\big\{\mu_n:\mu_n=M,~\Delta(\mu_n)=(\nu_n,\pi_n),~for~some~random~allocations
~process~on~ n~ bins~\big\}.$$

We denote by $\skris(\mu)$ the support of $ \mu$ and write for
$\mu_n\in\skrik^{(n)}(\nu_n,\pi_n),$

$$\vartheta_{1}^{(n)}(\pi_n,\mu_n)=n\alpha_1^{(n)}(\nu_n,\pi_n)-n\beta_1^{(n)}(\mu_n)-\sfrac{1}{2n}|\skris(\mu_n)|\log2\pi
n,$$  where
\begin{align*}
\alpha_1^{(n)}(\nu_n,\pi_n)=-\sfrac{1}{n}\log|\skrik^{(n)}(\nu_n,\pi_n)|&+\sfrac{1}{n}\sum_{a,b\in\skriy}\log\pi_n(b,a)+\sfrac{1}{2n}\big(|\skriy|+|\skriy|^{2}\big)\log2\pi n\\
&+\sfrac{1}{n^2}\sum_{a\in\skriy}\sfrac{1}{12\nu_n(a)+1/n}+\sfrac{1}{n}\sum_{a\in\skriy}\log\nu_n(a)+\sfrac{1}{n^2}\sum_{a,b\in\skriy}\sfrac{1}{12\pi_n(b,a)+1/n},
\end{align*}

$$\beta_1^{(n)}(\mu_n)=\sfrac{1}{n}\sum_{\heap{(a,l)\in\skriy\times\N^{\skriy}}{\mu_n(a,l)>0}}\log\mu_n(a,l)+\sfrac{1}{n^2}\sum_{\heap{(a,l)\in\skriy\times\N^{\skriy}}{\mu_n(a,l)>0}}\sfrac{1}{12\mu_n(a,l)+1/n}.$$
We write
$\vartheta_{2}^{(n)}(\pi_n,\mu_n)=n\alpha_2^{(n)}(\nu_n,\pi_n)-n\beta_2^{(n)}(\mu_n),$
where

\begin{align*}
\alpha_2^{(n)}(\nu_n,\pi_n)=\sfrac{1}{n}\log|\skrik^{(n)}(\nu_n,\pi_n)|+\sfrac{1}{n^2}\sum_{a,b\in\skriy}\sfrac{1}{12\pi_n(b,a)}&+\sfrac{1}{n^2}\sum_{\heap{a\in\skriy}{\nu_n(a)>0}}\sfrac{1}{12\nu_n(a)}+\sfrac{1}{n}\sum_{a,b\in\skriy}\log\pi_n(b,a)\\
&+\sfrac{1}{2n}\big(|\skriy|+|\skriy|^{2})\log2\pi
n+\sfrac{1}{n}\sum_{a\in\skriy}\log\nu_n(a),
\end{align*}
$$\beta_2^{(n)}(\mu_n)=\sfrac{1}{n}\sum_{\heap{(a,l)\in\skriy\times\N^{\skriy}}{\mu_n(a,l)>0}}\log\mu_n(a,l)\big)+\sfrac{1}{n}\sum_{\heap{(a,l)\in\skriy\times\N^{\skriy}}{\mu_n(a,l)>0}}\sfrac{1}{12\mu_n(a,l)}$$

 We prove Lemma~\ref{main11} above  from  the  following lemma
 which uses the  the  idea of  the method  of  types. see,
 (
 Dembo and  Zeitouni, 1998, Chapter~2).

\begin{lemma}\label{randomg.LDprobm}
For any  $\mu_n\in\skrik^{(n)}(\nu_n,\pi_n),$
\begin{equation}\nonumber
e^{-nH(\mu_n\,\|\,Poi_n)+\vartheta_{1}^{(n)}(\pi_n,\mu_n)} \le
\tilde{\prob}\big\{M_{\tilde{Y}}=\mu_n\,\big|\,\Delta(M_{\tilde{Y}})=(\nu_n,\pi_n)\big\}\le|\skrik^{(n)}(\nu_n,\pi_n)|^{-1}
e^{-nH(\mu_n\,\|\,Poi_n)+\vartheta_{2}^{(n)}(\pi_n,\mu_n)},\end{equation}
where
$$Poi_n(a\,,\,l)=\nu_n(a)\prod_{b\in\skriy}\frac{e^{-\pi_n(b,a)/\nu_n(a)}[\pi_n(b,a)/\nu_n(a)]^{l(b)}}{l(b)!},\,\mbox{for
$l\in\N^{\skriy}$} $$

$$\lim_{n\to \infty}\vartheta_{2}^{(n)}(\pi_n,\mu_n)=\lim_{n\to
\infty}\vartheta_{1}^{(n)}(\pi_n,\mu_n)=0$$

\end{lemma}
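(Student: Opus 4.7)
The strategy is a direct method-of-types calculation: I would write the conditional probability $\tilde\prob(M_{\tilde Y}=\mu_n\,|\,\Delta(M_{\tilde Y})=(\nu_n,\pi_n))$ exactly in closed combinatorial form, and then apply Robbins' sharpening of Stirling,
$$m\log m - m + \tfrac12\log(2\pi m) + \tfrac1{12m+1}\;\le\;\log m!\;\le\;m\log m - m + \tfrac12\log(2\pi m) + \tfrac1{12m},$$
to every factorial that appears. The leading exponential order will come out to $e^{-nH(\mu_n\,\|\,Poi_n)}$, while the $\tfrac12\log(2\pi m)$ and $\tfrac1{12m}, \tfrac1{12m+1}$ residuals are precisely what is packaged into $\vartheta_1^{(n)}$ (choosing the Robbins lower bound on numerator factorials and the upper bound on denominator ones) and $\vartheta_2^{(n)}$ (the opposite choice).

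Concretely, under $\tilde\prob_{(\nu_n,\pi_n)}$ the event $\{\Delta(M_{\tilde Y})=(\nu_n,\pi_n)\}$ has probability one by construction of the random allocation, so the conditional probability equals $\tilde\prob(M_{\tilde Y}=\mu_n)$. A configuration is specified by a symbol assignment on the bins together with, for each ordered pair $(b,a)$, an occupancy pattern of $m_n(b,a)$ balls of symbol $b$ among the $n\nu_n(a)$ bins of symbol $a$. Bin-type configurations of type $\mu_n$ are counted by the multinomial $n!/\prod_{(a,l)}(n\mu_n(a,l))!$; a fixed symbol assignment carries weight $\prod_a(n\nu_n(a))!/n!$; and a given occupancy pattern has probability $\prod_{b,a}m_n(b,a)!\,(n\nu_n(a))^{-m_n(b,a)}/\prod_v\ell^v(b)!$, the last denominator aggregating on $\{M_{\tilde Y}=\mu_n\}$ to $\prod_{(a,l)}\prod_b(l(b)!)^{n\mu_n(a,l)}$.

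Plugging the Robbins bounds into every factorial and collecting leading-order contributions, the $n\log n$ pieces cancel between numerator and denominator, and the identities $\Delta_1(\mu_n)=\nu_n$, $\Delta_2(\mu_n)=\pi_n$ let one rearrange $-n\sum\mu_n\log\mu_n + n\sum_a\nu_n(a)\log\nu_n(a) + \sum_{b,a}m_n(b,a)\log(m_n(b,a)/n\nu_n(a)) - \sum_{b,a}m_n(b,a) - n\sum_{(a,l)}\mu_n(a,l)\sum_b\log l(b)!$ into precisely $-nH(\mu_n\,\|\,Poi_n)$ on unpacking $\log Poi_n(a,l)$. The surviving $\tfrac12\log(2\pi\cdot)$ and $\tfrac1{12\cdot}$ terms collect into the $\alpha_i^{(n)},\beta_i^{(n)}$; the $|\skrik^{(n)}(\nu_n,\pi_n)|^{\pm 1}$ factor enters through the normalisation choice (either leaving the raw probability as is, or expressing it as a ratio over the admissible types), and is mere bookkeeping between the two bounds.

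The claim $\vartheta_i^{(n)}\to 0$ will then follow from three facts: (i) $|\skrik^{(n)}(\nu_n,\pi_n)|$ grows at most polynomially in $n$, so $n^{-1}\log|\skrik^{(n)}|\to 0$; (ii) under $\nu_n\to\nu$ with $\nu(a)>0$ and $\pi_n\to\pi$ finite, the Robbins tails $(12n\nu_n(a)+1)^{-1}$ and $(12m_n(b,a)+1)^{-1}$ are $O(1/n)$; and (iii) $(2n)^{-1}|\skris(\mu_n)|\log(2\pi n)\to 0$ because near-critical scaling keeps the Poisson-type occupancies $l(b)$ concentrated on $o(n/\log n)$ distinct values. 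I expect (iii) to be the principal obstacle: \emph{a priori} $|\skris(\mu_n)|$ could be as large as $n$, so without restricting to typical, Poisson-concentrated $\mu_n$ the correction $(2n)^{-1}|\skris(\mu_n)|\log(2\pi n)$ could spoil convergence. In the subsequent LDP argument this is handled by exponential tightness of $M_{\tilde Y}$ — sublevel sets of $H(\cdot\,\|\,Poi)$ are compact and so support-controlled — which means the bound is effectively used only on $\mu_n$ whose tails are uniformly thin.
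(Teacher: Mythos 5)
Your plan is essentially the paper's own proof: write the conditional probability as an exact ratio of counts (multinomial over bin types within each symbol class, times the occupancy-pattern count $\binom{n\pi_n(b,a)}{l_a^{\ssup j}(b)}(n\nu_n(a))^{-n\pi_n(b,a)}$), apply the Robbins--Feller refinement of Stirling to every factorial, and use $\Delta(\mu_n)=(\nu_n,\pi_n)$ to reassemble the leading terms into $-nH(\mu_n\,\|\,Poi_n)$, with the $\sfrac12\log(2\pi\cdot)$ and $(12\cdot)^{-1}$ residuals absorbed into $\vartheta_1^{(n)},\vartheta_2^{(n)}$ according to which side of Robbins is used. Two corrections to your error analysis. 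First, $|\skrik^{(n)}(\nu_n,\pi_n)|$ does \emph{not} grow polynomially: the type alphabet $\skriy\times\N^{\skriy}$ is infinite, and the count behaves like a number of partitions of $O(n)$, i.e.\ $e^{O(\sqrt n)}$; only subexponential growth $\sfrac1n\log|\skrik^{(n)}|\to 0$ is true and needed, which is exactly what the paper imports from Doku-Amponsah and M\"orters (2010, Lemma~4.1) rather than proving here. Second, your ``principal obstacle'' (iii) does not require exponential tightness or any Poisson concentration: the quantity actually used in the upper and lower bounds \eqref{randomg.probupper}--\eqref{randomg.problower} is the $\sfrac1n$-normalised error, and since $|\skris(\mu_n)|\le n$ trivially, the support term contributes at most $O(n^{-1}\log n)$ after normalisation, uniformly over all $\mu_n\in\skrik^{(n)}(\nu_n,\pi_n)$. (You are right, however, that the lemma's literal claim $\vartheta_1^{(n)}\to 0$ is delicate for measures with $|\skris(\mu_n)|$ of order $n$; the paper sidesteps this by only ever using the normalised limits, citing DM 2010, Lemmas~4.1 and~4.4.)
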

\begin{proof}
The proof of this lemma uses the  idea of the method of types, see,
 (Dembo and  Zeitouni, 1998, Chapter~2), combinatoric argument and
good estimates from  refined Stirling's formula. \footnote[1]
{$(2\pi)^{\sfrac12}n^{n+\sfrac12}e^{-n+1/(12n+1)}<n!<(2\pi)^{\sfrac12}n^{n+\sfrac12}e^{-n+1/(12n)},$
see ( Feller, 1971, page~52).}
\\ We denote by $\tilde{Y}$ the random allocation  process  and  observe that for any $\mu_n\in\skrik^{(n)}(\nu_n,\pi_n)$
we have
\begin{equation}\label{randomg.probcom}
\tilde{\prob}\big\{M_{\tilde{Y}}=\mu_n\,\big|\,\Delta(M_{\tilde{Y}})
=(\nu_n,\pi_n)\big\}=\frac{\sharp\big\{\tilde{Y}:\,M_{\tilde{Y}}=\mu_n,\,\Delta(M_{\tilde{Y}})
=(\nu_n,\pi_n)
\big\}}{\sharp\big\{\tilde{Y}:\,(L_{\tilde{Y}}^{1},L_{\tilde{Y}}^2)=(\nu_n,\pi_n)\big\}}.
\end{equation}

Now, the right side of \eqref{randomg.probcom} may be evaluated in
the following way:

\begin{itemize}
\item  For a given  empirical  measure $\mu_n$  with  $\Delta(\mu_n)=(\nu_n,\pi_n) $ there are
$$\prod_{a\in\skriy}\Big(\heap{n\nu_n(a)}{n\mu_n(a,l),\,l\in\N^{\skriy}}\Big)\prod_{a,b\in\skriy}\Big(\heap{n\pi_n(b,a)}{l_{a}^{(j)}(b),\,j=1,...,n\nu_n(a)}\Big)$$
 equally likely   random allocation processes  and
\item for every  empirical and empirical pair measure
$\Delta(\mu_n)=(\nu_n,\pi_n)$  there  are
 $\prod_{a,b\in\skriy}\Big(n\nu_n(a)\Big)^{n\pi_n(b,a)}$
 equally likely   random allocation processes
\end{itemize}
 Therefore,
\eqref{randomg.probcom} is equivalent to
\begin{align}
\tilde{\prob}\big\{\tilde{M}&=\mu_n\,\big|\,\Delta(\tilde{M})=(\nu_n,\pi_n)\big\}\\
&=\prod_{a\in\skriy}\Big(\heap{n\nu_n(a)}{n\mu_n(a,l),\,l\in\N^{\skriy}}\Big)\prod_{a,b\in\skriy}\Big(\heap{n\pi_n(b,a)}{l_{a}^{(j)}(b),\,j=1,...,n\nu_n(a)}\Big)\Big(\frac{1}{n\nu_n(a)}\Big)^{n\pi_n(b,a)},\label{Tclass}
\end{align}
while $\tilde{\prob}\big\{\tilde{M}=\mu_n\,\big|\,\Delta(\tilde{M})
=(\nu_n,\pi_n)\big\}=0$ when $\Delta(\mu_n)
\not=(\nu_n,\pi_n)$ by convention.\\

Suppose $\pi_n(b,a)=0,$  for some $a,b\in\skriy$ then

\begin{equation}\label{randomg.One}
\Big(\heap{n\pi_n(b,a)}{l_{a}^{(j)}(b),\,j=1,...,n\nu_n(a)}\Big)=1.
\end{equation}

Suppose  $\pi_n(b,a)>0,$  a good estimate of $(n\pi_n(b,a))!$ can be
obtained  from the refined Stirling's approximation, as

\begin{align*}
\exp\Big(n\pi_n(b,a)&\log
n\pi_n(b,a)-n\pi_n(b,a)+\sfrac12\log\pi_n(b,a)
+\sfrac{1}{12n\pi_n(b,a)+1}+\sfrac12\log
2n\pi\Big)\le(n\pi_n(b,a))!\\
&\le\exp\Big(n\pi_n(b,a)\log
n\pi_n(b,a)-n\pi_n(b,a)+\sfrac12\log\pi_n(b,a)+\sfrac{1}{12n\pi_n(b,a)}+\sfrac{1}{2}\log2\pi
n).
\end{align*}

Similarly, from the refined Stirling's approximation,  see
 ( Feller, 1971, page~52), we have

\begin{align}
&\exp\Big(n\sum_{a\in\skriy}\nu_n(a)\log\nu_n(a)-n\sum_{(a,l)}\mu_n(a,l)\log\mu_n(a,l)+\sfrac{1}{n}\sum_{a\in\skriy}\sfrac{1}{12\nu_n(a)+1/n}+\sfrac{|\skriy|-|\skris(\mu_n)|}{2}\log2\pi
n\Big)\nonumber\\
&\times\exp\Big(-\sfrac1n\sum_{(a,l)\in\skriy\times\N^{\skriy}\atop\mu_n(a,l)>0}\sfrac{1}{12\mu_n(a,l)+1/n}\Big)\le\prod_{a\in\skriy}\Big(\heap{n\nu_n(a)}{n\mu_n(a,l),\,l\in\N^{\skriy}}\Big)\le\exp\Big(n\sum_{a\in\skriy}\nu_n(a)\log\nu_n(a)\Big)\nonumber\\
&\times\exp\Big(-n\sum_{(a,l)}\mu_n(a,l)\log\mu_n(a,l)-\sfrac1n\sum_{(a,l)\in\skriy\times\N^{\skriy}\atop\mu_n(a,l)>0}\sfrac{1}{12\mu_n(a,l)}+\sfrac{|\skriy|-|\skris(\mu_n)|}{2}\log2\pi
n+\sfrac{1}{n}\sum_{a\in\skriy}\sfrac{1}{12\nu_n(a)}\Big).\label{randomg.condientrop}
\end{align}

We observe that
 $\prod_{a,b\in\skriy}\prod_{j=1}^{n\nu_n(a)}l_{a}^{(j)}(b)!=\prod_{b\in\skriy}\exp\big(n\sum_{(a,l)}(\log l(b)!)\mu_n(b,a)\big),$
and hence
\begin{align*}
&\exp\Big(\sum_{b\in\skriy}\big[n\sum_{a\in\skriy}\pi_n(b,a)\log
\pi_n(b,a)-n\sum_{a\in\skriy}\pi_n(b,a)-n\sum_{a\in\skriy}\pi_n(b,a)\log\nu_n(a)\big]+\sfrac12\sum_{a,b\in\skriy}\log\pi_n(b,a)\Big)\\
&\times\exp\Big(n\sum_{b\in\skriy}\sum_{(a,l)}(\log
l(b)!)\mu_n(a,l)+\sfrac{|\skriy|^2}{2}\log2\pi
n+\sum_{a,b\in\skriy}\sfrac{1}{12n\pi_n(b,a)+1}\Big)\\
&\le\prod_{a,b\in\skriy}\Big(\heap{n\pi_n(b,a)}{l_{a}^{(j)}(b),\,j=1,...,n\nu_n(a)}\Big)\Big(\frac{1}{n\nu_n(a)}\Big)^{n\pi_n(b,a)}\\
&\le\exp\Big(n\sum_{b\in\skriy}\sum_{(a,l)}(\log l(b)!)\mu_n(a,l)+\sfrac12\sum_{a,b\in\skriy}\log\pi_n(b,a)\Big)\\
&\times\exp\Big(\sum_{b\in\skriy}\big[n\sum_{a\in\skriy}\pi_n(b,a)\log\sum_{a\in\skriy}\pi_n(b,a)-n\sum_{a\in\skriy}\pi_n(b,a)
-n\sum_{a\in\skriy}\pi_n(b,a)\log\nu_n(a)\big]+\sum_{a,b\in\skriy}\sfrac{1}{12n\pi_n(b,a)}\Big)\\
&\times\exp\Big(\sfrac{|\skriy|^2}{2}\log2\pi n\Big).
\end{align*}

Putting everything together and choosing
$\vartheta_{1}^{(n)}(\pi_n,\mu_n)$ and
$\vartheta_{2}^{(n)}(\pi_n,\mu_n)$ appropriately,  we have that
\begin{align*}
& \exp\Big(nH(\mu_n)+\sum_{b\in\skriy}\big[n\sum_{a\in\skriy}\pi_n(b,a)\log\sum_{a\in\skriy}\pi_n(b,a)-n\sum_{a\in\skriy}\pi_n(b,a)-n\sum_{a\in\skriy}\pi_n(b,a)\log\nu_n(a)\big]\Big)\\
&\times\exp\Big(-nH(\nu)-n\sum_{b\in\skriy}\sum_{(a,l)}(\log l(b)!)\mu_n(a,l)+\vartheta_{1}^{(n)}(\pi_n,\mu_n)\Big)\le\tilde{\prob}\big\{M=\mu_n\,\big|\,\Delta(M)=(\nu_n,\pi_n)\big\}\\
&\le \exp\Big(-nH(\nu)-\sum_{b\in\skriy}\big[n\sum_{(a,l)}(\log l(b)!)\mu_n(a,l)-n\sum_{a\in\skriy}\pi_n(b,a)-n\sum_{a\in\skriy}\pi_n(b,a)\log\nu_n(a)\big]\Big)\\
&\times\exp\Big(nH(\mu_n)+n\sum_{b\in\skriy}\sum_{a\in\skriy}\pi_n(b,a)\log\sum_{a\in\skriy}\pi_n(b,a)-\log|\skrik^{(n)}(\nu_n,\pi_n)|+\vartheta_{2}^{(n)}(\pi_n,\mu_n)\Big).
\end{align*}
Collecting and rearranging terms properly and using
$\Delta(\mu_n)=(\nu_n,\pi_n),$  we have that
\begin{align*}
&H(\nu_n)-H(\mu_n)-\sum_{b\in\skriy}\Big[\sum_{(a,l)}l(b)\mu_n(a,l)\log\sum_{(a,l)}l(b)\mu_n(a,l)-\sum_{(a,l)}l(b)\mu_n(a,l)-\sum_{(a,l)}l(b)\mu_n(a,l)\log\nu_n(a)\\
&-\sum_{(a,l)}(\log l(b)!)\mu_n(a,l)\Big]\\
&=\sum_{(a,l)}\mu_n(a,l)\Big[\log\mu_n(a,l)-\log\nu_n(a)-\sum_{b\in\skriy}\Big(\log\big(\sfrac{\pi_n(b,a)}{\nu_n(a)})^{l(b)}-\sfrac{\pi_n(b,a)}{\nu_n(a)}-\log\sum_{l}(\log l(b)!)\Big)\Big]\\
&=\sum_{(a,l)}\mu_n(a,l)\Big[\log\mu_n(a,l)-\log\big(\nu_n(a)\prod_{b\in\skriy}\sfrac{(\pi_n(b,a)/\nu_n(a))^{l(b)}\exp(-\pi_n(b,a)/\nu_n(a))}{l(b)!}\big)\Big]\\
&=H(\mu_n\,\|\,Poi_n)
\end{align*}
which completes the proof of Lemma~\ref{randomg.LDprobm}.\end{proof}
We prove  from Lemma~\ref{randomg.LDprobm} and ( Doku-Amponsah
 and  Moerters, 2010, Lemmas~4.1~and~4.4), upper bounds and lower bounds in the
large deviation principle for all finite $n.$ Let
$\Sigma\subset\skriw(\skriy\times\N^{\skriy}).$ Then
\eqref{randomg.LDprobm} gives the upper bound
\begin{align}
\tilde{\prob}\Big\{M_{\tilde{Y}}\in\Sigma\,\big|\,\Delta(M_{\tilde{Y}})=(\nu_n,\pi_n)\Big\}&
=\sum_{\mu_n\in\Sigma\cap\skrik^{(n)}(\nu_n,\pi_n)}\tilde{\prob}\big\{M_{\tilde{Y}}=\mu_n\,\big|\,\Delta(M_{\tilde{Y}})=(\nu_n,\pi_n)\big\}\nonumber\\
&\le\sum_{\mu_n\in\Sigma\cap\skrik^{(n)}(\nu_n,\pi_n)}|\skrik^{(n)}(\nu_n,\pi_n)|^{-1}e^{-nH(\mu_n\,\|\,Poi_n)+n\vartheta_2^{(n)}(\nu_n,\pi_n)}\nonumber\\
&\le
e^{-n\inf_{\mu_n\in\Sigma\cap\skrik^{(n)}(\nu_n,\pi_n)}H(\mu_n\,\|\,Poi_n)+n\vartheta_2^{(n)}(\nu_n,\pi_n)}.\label{randomg.probupper}
\end{align}
The corresponding lower bound is
\begin{align}
\tilde{\prob}\Big\{M_{\tilde{Y}}\in\Sigma\,\big|\,\Delta(M_{\tilde{Y}})&=(\nu_n,\pi_n)\Big\}
=\sum_{\mu_n\in\Sigma\cap\skrik^{(n)}(\nu_n,\pi_n)}\tilde{\prob}\big\{M_{\tilde{Y}}=\mu_n\,\big|\,\Delta(M_{\tilde{Y}})=(\nu_n,\pi_n)\big\}\nonumber\\
&\ge
e^{-n\inf_{\mu_n\in\Sigma\cap\skrik^{(n)}(\nu_n,\pi_n)}H(\mu_n\,\|\,Poi_n)}\sum_{\mu_n\in\Sigma\cap\skrik^{(n)}
(\nu_n,\pi_n)}e^{\vartheta_1^{(n)}(\nu_n,\pi_n)}\nonumber\\
&\ge
e^{-n\inf_{\mu_n\in\Sigma\cap\skrik^{(n)}(\nu_n,\pi_n)}H(\mu_n\,\|\,Poi_n)+n\vartheta_1^{(n)}(\nu_n,\pi_n)}.\label{randomg.problower}
\end{align}

Since
$\limsup_{n\rightarrow\infty}\sfrac{1}{n}\vartheta_{2}^{(n)}(\nu_n,\pi_n)=0
$ and
$\liminf_{n\rightarrow\infty}\sfrac{1}{n}\vartheta_{1}^{(n)}(\nu_n,\pi_n)=0$
( by  (Doku-Amponsah and  Moerters, 2010,Lemmas~4.1~and~4.4)), the
normalized logarithmic limits of \eqref{randomg.probupper} and
\eqref{randomg.problower} gives
\begin{equation}
\limsup_{n\rightarrow\infty}\sfrac{1}{n}\log\tilde{\prob}\Big\{M_{\tilde{Y}}\in\Sigma\,\big|\,\Delta(M_{\tilde{Y}})=(\nu_n,\pi_n)\Big\}
=-\liminf_{n\rightarrow\infty}\big\{\inf_{\mu_n\in\Sigma\cap\skrik^{(n)}(\nu_n,\pi_n)}H(\mu_n\,\|\,Poi_n)\big\}\label{randomg.puu}
\end{equation}
and

\begin{equation}
\liminf_{n\rightarrow\infty}\sfrac{1}{n}\log\tilde{\prob}\Big\{M_{\tilde{Y}}\in\Sigma\,\big|\,\Delta(M_{\tilde{Y}})=(\nu_n,\pi_n)\Big\}=-\limsup_{n\rightarrow\infty}\big\{\inf_{\mu_n\in\Sigma\cap\skrik^{(n)}(\nu_n,\pi_n)}H(\mu_n\,\|\,Poi_n)\big\}\label{randomg.pll}
\end{equation}
The upper bound in \eqref{randomg.LDprobm} follows from
\eqref{randomg.puu} , as
$\Sigma\cap\skrik^{(n)}(\nu_n,\pi_n)\subset\Sigma$  for all $n.$

Now  fix $\mu\in \skriw(\skriy\times\N^{\skriy}).$ Then, by \cite
[Lemma~4.9]{DM06a}, there
 exists a sequence
 $\mu_n\in\Sigma\cap\skrik^{(n)}(\nu_n,\pi_n)$ such that $\mu_n\rightarrow\mu$ as $n\rightarrow\infty.$
 Therefore, by continuity of  entropy,  see, example \cite[p.19,~equation~14]{DM06a} we have  that
\begin{equation}\nonumber
\limsup_{n\rightarrow\infty}\big\{\inf_{\mu^{'}\in\Sigma\cap\skrik^{(n)}(\nu_n,\pi_n)}H(\mu^{'}\,\|\,Poi_n)\big\}\le
\lim_{n\rightarrow\infty}H(\mu_n\,\|\,Poi_n)=H(\mu\,\|\,Poi).
\end{equation}
Recall that $H(\mu\,\|\,Q)=\infty$ whenever, for some
$(b,l)\in\skriy\times\N^{\skriy},$ $\mu(b,l)>0$ while $Poi(b,l)=0.$
Hence, by the preceding inequality  we have
  $$\limsup_{n\rightarrow\infty}\big\{\inf_{\mu^{'}\in\Sigma\cap\skrik^{(n)}(\nu_n,\pi_n)}H(\mu^{'}\,\|\,Poi_n)\big\}\le
\inf_{\mu\in int(\Sigma)}H(\mu\,\|\,Poi),$$ which gives the lower
bound in \eqref{randomg.LDprobm}, for $\mu$ satisfying
$\Delta(\mu)=(\nu,\pi).$ To conclude the prove of the lower bound we
note that  by \cite[Lemma~4.6]{DM06a} for any $\mu\in\Sigma$ with
$\mu_1=\nu$ and $\Delta_2(\mu)\le\pi$ there exists $\mu_n\in
\skrik^{(n)}(\nu_n,\pi_n)$ converging  weakly to $\mu$  such that
$H(\mu_n\,\|\,Poi_n)$ converges to $H(\mu\,\|\,Poi).$

\emph{Proof  of  Theorem~\ref{main3}}

We  obtain  this corollary from Theorem~\ref{main2}  by the
application of  the contraction principle, ( Dembo and
 Zeitouni, 1998, Theorem~4.2.1) to the linear map $F:\skriw(\N)\to [0,1]$
given  by $F(d)=d(0).$

In fact  Theorem~\ref{main2}  implies  an  LDP  for  random variable
$F(D_Y)=D_Y(0)$ with  good,  convex  rate  function
$$\eta(x)=\inf\big\{H(d\,\|\,q_c):d\in\skriw(\N), d(0)=x, \,\sum_{k=0}^{\infty}kd(k)= c\,\big\}.$$

Note that, for  a  general $x$ the  class of  distributions
satisfying the two constraints  might  be  non  empty. Since  we
have  $$c=\sum_{k=1}^{\infty}kd(k)\ge \sum_{k=1}^{\infty}d(k)=1-x,$$
the  class  is  necessarily  empty if  $c<1-x$.  If  $c\ge 1-x$,  a
Lagrangian  calculation  gives that  the  mininum  is  attained at
$p,$  defined  by $ p(0)=x$,
$p(k):=Z(x,c)^{-1}\sfrac{(\lambda(x,c))^k}{k!}$ where $\lambda(x,c)$
is  the  unique  root  of
$$\sfrac{e^{\lambda}-1}{\lambda}=\sfrac{1-x}{c}$$
and  $Z(x,c):=\sfrac{e^{\lambda-1}}{1-x}.$  Therefore we  have that

\begin{equation}\begin{aligned}\label{equ.last}
\eta(x)&=x\log\frac{x}{q_c(0)}
+(1-x)\log\frac{(1-x)}{1-q_c(0)}+(1-x)\sum_{k=1}^{\infty}d_x(k)\log\sfrac{d_x(k)}{\hat{q}_{c}(k)}.\\
&=x\log\frac{x}{q_c(0)} +(1-x)\log\frac{(1-x)}{1-q_c(0)}+c\log
\sfrac{\lambda}{c}
\end{aligned}
\end{equation}
if  $c\ge 1-x$  and  $\infty$  otherwise. In  particular if
$x=e^{-c}$ then  $\lambda(x,c)=c,$  which gives  $\eta(e^{-c})=0.$
This  completes  the  proof  of  the theorem.

\textbf{Acknowledgements}

We  are  thankful  to  the  referees  for their suggestions which
have helped  improved  this article.

\textbf{References}

\hangafter=1

\setlength{\hangindent}{2em}

{\sc Bordenave,C.} and {\sc Caputo,  P.}(2013). {\newblock Large
deviations of empirical neighborhood distribution in sparse random
graphs.} {\newblock arxiv:1308.5725 (2013).}

\hangafter=1

\setlength{\hangindent}{2em}

\ {\sc Bennett, G.}(1962)
\newblock {Probability Inequalities for the Sum of
Independent Random Variables}
\newblock {\emph{Journal of the American Statistical
Association 57 (297): 33–45. doi:10.2307/2282438 (1962)}}

\hangafter=1

\setlength{\hangindent}{2em}

 {\sc Boucheron, S.} {\sc Gamboa, F. }and {\sc Leonard, C.}(2002).
 \newblock{ Bins and balls: Large deviations of the empirical occupancy process.}
\newblock{\emph{Ann. Appl. Probab. 12 607-636 (2002).}http://dx.doi.org/10.1214/aoap/1026915618}

\hangafter=1

\setlength{\hangindent}{2em}

{\sc  Biggins, J.D.}(2004).
\newblock{Large deviations for mixtures.}
\newblock{\emph{ El. Comm. Probab.9 60 71 (2004).}}

\hangafter=1

\setlength{\hangindent}{2em}

 {\sc Biggins, J.D.} ~and~ {\sc Penman, D. B.}(2009)
\newblock{ Large deviations in randomly coloured random graphs.}
\newblock{Electron. Comm. Probab. 14 290–301(2009).}
\newblock{ Mathematical Reviews (MathSciNet): MR2524980  Zentralblatt MATH: 1185.05125.}
\hangafter=1

\setlength{\hangindent}{2em}

{\sc Cannings, C.} and {\sc ~Penman,D.B}(2003)
\newblock{Models of random graphs and their applications}
\newblock{\em Handbook of Statistics 21. Stochastic Processes: Modeling
and Simulation.}
\newblock{Eds:\,D.N. Shanbhag and C.R. Rao}. Elsevier (2003) 51-91.

\hangafter=1

\setlength{\hangindent}{2em}

{\sc Doku-Amponsah, K.}(2006).
\newblock{Large deviations and basic information theory for hierarchical and networked data structures.}
\newblock PhD Thesis, Bath (2006).

\hangafter=1

\setlength{\hangindent}{2em}

{\sc Doku-Amponsah,  K.}(2012).
\newblock{Basic information theory, thermo-limits  for network structures.}
\newblock LAP Lambert Academic Publishing, (2012).

\hangafter=1

\setlength{\hangindent}{2em}

{\sc Doku-Amponsah,  K.}(2011)
\newblock{Asymptotic equipartition properties for hierarchical and networked  structures.}
\newblock ESAIM:\emph{Probability  and Statistics}.DOI: 10.1051/ps/2010016 :
Published online by Cambridge University Press: 03 February
2011.http://dx.doi.org/10.1051/ps/2010016.

\hangafter=1

\setlength{\hangindent}{2em}

{\sc Doku-Amponsah, K.}  and {\sc M\"orters, P.}(2010)
\newblock{Large deviation principle for  empirical measures of
coloured random graphs.}
\newblock{ \emph{ The  annals  of  Applied  Probability,} 20,6 (2010),1989-2021. http://dx.doi.org/10.1214/09-AAP647.}

{\sc Dembo, A., M\"orters, P.} and {\sc Sheffield, S.}(2005).
\newblock Large deviations of Markov chains indexed by random trees.
\newblock \emph{Ann. Inst. Henri Poincar\'e: Probab.et Stat.41,}
(2005) 971-996.http://dx.doi.org/10.1016/j.anihpb.2004.09.005.

\hangafter=1

\setlength{\hangindent}{2em}

{\sc Dembo, A.} and {\sc Zeitouni,O. }(1998).
\newblock Large deviations techniques and applications.
\newblock Springer, New York, (1998).

\hangafter=1

\setlength{\hangindent}{2em}

{\sc Feller, W.}(1967)
\newblock{An introduction to probability theory and its applications.}
\newblock{Vol.~I, Wiley, New York. Third edition, (1967).}
\hangafter=1

\setlength{\hangindent}{2em}

{\sc Van Der Hofstad, R.}(2009).
\newblock {Random  Graphs  and  Complex
Networks.}
\newblock{ Eindhoven  University   of  Technology.}
\newblock  {Unpublish  Manuscript.}

\hangafter=1

\setlength{\hangindent}{2em}

{\sc  Mukherjee, S.}(2013).
\newblock {Large deviation for the empirical
degree distribution of an Erdos-Renyi graph.}
\newblock{ arXiv:1310.4160 (2013).}

\hangafter=1

\setlength{\hangindent}{2em}

 {\sc ~Newman, m.e.}(2000)
 \newblock Random graphs as models of networks.
\newblock {\emph{http://arxiv.org/abs/cond-mat/0202208}}

\hangafter=1

\setlength{\hangindent}{2em}

{\sc N. O'Connell,  N.} (1998).
\newblock Some large deviation results for sparse random graphs.
\newblock \emph{Probab. Theory Relat. Fields} {\bf 110} 277--285 (1998).
\smallskip

\hangafter=1

\setlength{\hangindent}{2em}

{\sc Penman, D.B.}(1998).
\newblock{Random graphs with correlation structure.}
\newblock PhD Thesis, Sheffield 1998.
\smallskip

{\sc Sion, M.}(1958).
\newblock{On  general  Minimax theorems.}
\newblock {Pacific  J. Math. 8 171-176 1958.}
\smallskip

\vspace{0.5cm}
\textbf{Copyrights}

Copyright for this article is retained by the author(s), with first publication rights granted to the journal.

This is an open-access article distributed under the terms and conditions of the Creative Commons Attribution license (http://creativecommons.org/licenses/by/3.0/).

\end{document}